\newtheorem{theorem}{Theorem}[section]
\newtheorem{lemma}[theorem]{Lemma}
\newtheorem{proposition}[theorem]{Proposition}
\newtheorem{example}{Example}[section]
\newtheorem{corollary}[theorem]{Corollary}
\newtheorem{definition}{Definition}[section]
\def \<{\langle}
\def \>{\rangle}
\DeclareFontFamily{OMS}{smallo}{}
\DeclareFontShape{OMS}{smallo}{m}{n}{<->s*[.65]cmsy10}{}
\DeclareSymbolFont{smallo@m}{OMS}{smallo}{m}{n}
\DeclareMathSymbol{\smallo}{\mathord}{smallo@m}{79}
\def\@serieslogo{\@empty}
\title{The Igusa zeta function of restricted power series over $\mathbb{Q}_p$}
\author[]{Leonie Dausy}
\begin{document}

\begin{abstract}
In this article, we ask whether the Igusa zeta function of a restricted power series over $\mathbb{Q}_p$ can be determined solely from the terms of degree at most $D$. That is, we ask whether the truncated polynomial $f_D$, consisting of all terms of $f$ of degree $\leq D$, yields the same Igusa zeta function as $f$ for sufficiently large $D$. Our main results include a counterexample already in the one-variable case, but also a positive result under the condition that $f$ is sufficiently non-degenerate with respect to its Newton polyhedron $\Gamma(f)$.
\end{abstract}

\maketitle
\section{Introduction}
\noindent
Igusa introduced the local zeta function in the 1960s as a tool to study Diophantine problems. For a polynomial $f$ over  $\mathbb{Z}_p$, the local zeta function is defined by
   \begin{equation*}
        Z_f(s) = \int_{\mathbb{Z}_p} \vert f(x) \vert_p^s \vert dx \vert,
    \end{equation*}
where $\vert dx \vert$ denotes the Haar measure. This integral is directly related to counting the number of solutions of the congruences $f(x) \equiv 0 \mod p^m$, $m= 1, 2, 3, \dots$, see e.g. \cite{igusa1978lectures}, pp. 97 98. Using resolutions of singularities, Igusa proved that $Z_f(s)$ is a rational function of $p^{-s}$ (see also \cite{igusa1978lectures}). An entirely different proof was obtained ten years later by Denef \cite{denef1984rationality} using p-adic cell decomposition instead of resolutions of singularities. Despite these theoretical advances, explicitly computing $Z_f(s)$ for a given function $f$ remains a highly nontrivial task. Over the years, a number of results have been established for specific classes of polynomials. In this article, we extend the investigation to the setting of restricted power series over $\mathbb{Q}_p$\\ \\
{\bf Notations.} Fix a prime number $p$ and consider a tuple of $M$ variables $X = (X_1, \dots, X_M)$. For $i = (i_1, \dots, i_M)$ ranging over $\mathbb{N}^M$, we use the notation $\vert i \vert = i_1 + \dots + i_M$ and $X^{i}= X_1^{i_1}\dots X_M^{i_M}$. 
    \begin{definition}
        The ring $\mathbb{Q}_p \{X\}$ of restricted power series in $X$ over the $p$-adic field $\mathbb{Q}_p$ consists of the formal power series $\sum_{i}a_iX^{i} \in \mathbb{Q}_p[[X]]$ such that $\vert a_i \vert_p \rightarrow 0$ as $\vert i \vert \rightarrow \infty$. The subring $\mathbb{Z}_p \{X\}$ of $\mathbb{Q}_p \{X\}$ consists of the series $\sum_{i}a_ix^{i}$ all of whose coefficients $a_i$ are in $\mathbb{Z}_p$. 
    \end{definition}
\noindent
Let $f = \sum_{i}a_iX^{i}$ be in $\mathbb{Q}_p \{X\}$ and $x \in \mathbb{Z}_p$. Then the series $\sum_{i}a_ix^{i}$ converges to a limit in $\mathbb{Q}_p$
which we denote by $f(x)$. For any such analytic series, we can analogously define the Igusa Zeta functions as for polynomials.
   \begin{definition}
    Consider $f \in \mathbb{Z}_p\{X\}$ and $s \in \mathbb{C}$ with $\Re(s) > 0$. The Igusa zeta function of $f$ is defined as
        \begin{equation*}
            Z_f(s) = Z_p(f;s) \coloneqq \int_{\mathbb{Z}_p^n}\vert f(x) \vert_p^s \vert dx \vert
        \end{equation*}
    where $\vert dx \vert$ denotes the Haar measure.
    \end{definition}
    \begin{example}\label{integral_x^k}
A very standard example is the integral $\int_{\mathbb{Z}_p} \vert x^n \vert^s \vert dx \vert$. Denote $\{x \in \mathbb{Q}_p \, \vert \, \vert x \vert_p = p^{-l} \}$ by $A_l$ and observe that
    \begin{align*}
        \vert dx \vert (A_l) 
        & = \vert dx \vert \big( \{x \in \mathbb{Q}_p \, \vert \, \vert x \vert_p \leq p^{-l} \} \setminus \{x \in \mathbb{Q}_p \, \vert \, \vert x \vert_p \leq p^{-(l+1)} \} \big)  \\
        & = p^{-l} - p^{-l-1}  = \big( 1 - \frac{1}{p} \big)p^{-l}.
    \end{align*}
We can calculate $\int_{\mathbb{Z}_p} \vert x^n \vert^s \vert dx \vert$ as follows:
    \begin{align*}
        \int_{\mathbb{Z}_p} \vert x^n \vert^s \vert dx \vert & = \sum_{l \geq 0} \int_{A_l} \vert x^n \vert^s \vert dx \vert  = \sum_{l \geq 0} p^{-nsl} \int_{A_l} \vert dx \vert  = \sum_{l \geq 0} p^{-nsl}  \vert dx \vert(A_l)  \\
        & = \big( 1 - \frac{1}{p} \big)\sum_{l \geq 0} p^{-(ns+1)l}.
    \end{align*}
\end{example}
\noindent
{\bf Main question.} Consider a formal power series $f(X) = \sum_{i}a_iX^{i}$ in $\mathbb{Z}_p \{X\}$. Does there always exist $N \in \mathbb{N}$ such that for all $D \geq N$, the cut off $f_D(X) = \sum_{\vert i \vert \leq D}a_iX^{i}$ of $f$ at degree $D$ has the same Igusa Zeta function as $f$? \\ \\
{\bf Main result.}
A counterexample already exists in the one-variable case. Nonetheless, there is a partial positive result: the answer is yes when $f$ and $f'$ do not vanish simultaneously. This non-degeneracy condition extends to multiple variables, where the condition now becomes: non-degenerate over $\mathbb{F}_p$ with respect to the faces of the Newton polyhedron.

\section{Counterexample to the main question}\label{one_variable_counterexample}
\noindent
Consider the series
    \begin{equation*}
        f(x) = (x-1)^{2}\sum_{i = 0}^{\infty}p^{i}x^{i}.
    \end{equation*}
Since $\vert \sum_{i = 0}^{\infty}p^{i}x^{i} \vert_p = 1$ for all $x \in \mathbb{Z}_p$, we have $\int_{\mathbb{Z}_p} \vert f(x) \vert_p^s \vert dx \vert =\int_{\mathbb{Z}_p} \vert (x-1)^{2} \vert_p^s \vert dx \vert$.
Making a change of variables $x \rightarrow x+1$, we see $ \int_{\mathbb{Z}_p} \vert f(x) \vert_p^s \vert dx \vert =\int_{\mathbb{Z}_p}\vert x^{2} \vert_p^s \vert dx \vert$. By example (\ref{integral_x^k}), we get
 \begin{align}\label{integral_counterexample_onevariable}
        Z_f(s) = \int_{\mathbb{Z}_p} \vert f(x) \vert_p^s \vert dx \vert  = \Big( 1 -\frac{1}{p} \Big) \sum_{l \geq 0}p^{-(2s+1)l}.
    \end{align}
Next, an easy calculation shows
    \begin{align*}
        f_{2D}(x) = (x-1)^2\Big( \sum_{i = 0}^{2D-2}p^i x^i \Big) + p^{2D-1}x^{2D-1}(1 + (p-2)x).
    \end{align*}
Observe that if $x \not \equiv 1 \mod p$, then $(x-1)^2\Big( \sum_{i = 0}^{2D-2}p^i x^i \Big)$ has valuation zero and $p^{2D-1}x^{2D-1}(1 + (p-2)x)$ has valuation $\geq 2D-1$, so that $v_p(f_{2D}(x)) = 0$ or equivalently $\vert f_{2D}(x) \vert_p = 1$. Hence
    \begin{align*}
        \sum_{a \in \mathbb{F}_p \setminus \{1\}}\int_{a + p\mathbb{Z}_p}\vert f_{2D} \vert_p^s \vert dx \vert 
        & = \sum_{a \in \mathbb{F}_p \setminus \{1\}}\int_{a + p\mathbb{Z}_p}1 \vert dx \vert \\
        & = \sum_{a \in \mathbb{F}_p \setminus \{1\}}\vert dx \vert (a + p \mathbb{Z}_p) \\
        & = \sum_{a \in \mathbb{F}_p \setminus \{1\}}\frac{1}{p} =  1 - \frac{1}{p}.
    \end{align*}
Starting from the fact that $ \int_{\mathbb{Z}_p}\vert f_{2D} \vert_p^s \vert dx \vert = \sum_{a \in \mathbb{F}_p}\int_{a + p\mathbb{Z}_p}\vert f_{2D} \vert_p^s \vert dx \vert$, it only remains to calculate $\int_{1 + p\mathbb{Z}_p}\vert f_{2D} \vert_p^s \vert dx \vert$. Note that if $x \equiv 1 \mod p$, then $v_p(x) = 0$. Therefore 
    \begin{equation*}
        f_{2D}(x) = \underbrace{(x-1)^2\Big( \sum_{i = 0}^{2D-2}p^i x^i \Big)}_{v_p = 2v_p(x-1)} + \underbrace{p^{2D-1}x^{2D-1}(1 + (p-2)x)}_{v_p = 2D-1}
    \end{equation*}
Since $2v_p(x-1)$ is even and $2D-1$ is odd, one of those two terms will always dominate. More precisely, if we define
    \begin{equation*}
        B_l \coloneqq \{x \in \mathbb{Z}_p \, \vert \, v_p(x-1) = l\},
    \end{equation*}
it holds that
    \begin{equation*}
        \vert f_{2D}(x) \vert_p = 
            \begin{cases}
                p^{-2l} \quad \quad \quad \,\text{if} \, x \in B_l \,\, \text{for} \,\, 1 \leq l < D \\
                p^{-(2D-1)} \quad \,\text{if} \, x \in B_l \, \,\text{for} \,\, l \geq D.
            \end{cases}
    \end{equation*}
Because $1 + p \mathbb{Z}_p = \sqcup_{l=1}^{\infty}B_l$, we have
    \begin{align*}
        \int_{1 + p\mathbb{Z}_p} \vert f_{2D} \vert_p^s \vert dx \vert
        & = \sum_{l = 1}^{\infty} \int_{B_l} \vert f_{2D} \vert_p^s \vert dx \vert =  
         \underbrace{\sum_{l = 1}^{D-1} \int_{B_l} \vert f_{2D} \vert_p^s \vert dx \vert }_{(1)} + \underbrace{\sum_{l = D}^{\infty} \int_{B_l} \vert f_{2D} \vert_p^s \vert dx \vert}_{(2)}.
    \end{align*}
Then
\begin{alignat*}{4}
\text{(1)} 
& = \sum_{l = 1}^{D-1} p^{-2sl} \int_{B_l} \vert dx \vert 
& \quad\quad\quad
\text{(2)} 
& = \sum_{l = D}^{\infty} p^{(-2D-1)s} \int_{B_l} \vert dx \vert \\
& = \sum_{l = 1}^{D-1} p^{-2sl} \vert dx \vert(B_l) 
& \quad\quad\quad
& = p^{(-2D-1)s} \sum_{l = D}^{\infty} \int_{B_l} \vert dx \vert \\
& = \sum_{l = 1}^{D-1} p^{-2sl} \Big(1 - \frac{1}{p}\Big)p^{-l} 
& \quad\quad\quad
& = p^{(-2D-1)s} \int_{1 + p^D\mathbb{Z}_p} \vert dx \vert \\
& = \Big(1 - \frac{1}{p} \Big) \sum_{l = 1}^{D-1} p^{(-2s+1)l} 
& \quad\quad\quad
& = \frac{p^{-(2D+1)s}}{p^{D}}.
\end{alignat*}
All together, we get
    \begin{equation*}
        Z_{f_{2D}}(s) = \Big(1 - \frac{1}{p} \Big) \sum_{l = 0}^{D-1} p^{(-2s+1)l} + \frac{p^{-(2D+1)s}}{p^{D}}.
    \end{equation*}
For no value of $D$ is this equal to (\ref{integral_counterexample_onevariable}) and we are done. 

\section{The non-degenerate case in one variable}
\noindent
Let $f(x) = \sum_i a_i x^{i} \in \mathbb{Z}_p \{x\}$ be a restricted power series in one variable. We begin by zooming in and considering $p$-adic integrals of the form $\int_{\alpha + p^m \mathbb{Z}_p} \vert f(x) \vert_p^s \vert dx \vert$. For these integrals, we can get nice expressions if $m$ is sufficiently large.  We will distinguish three cases: 
    \begin{enumerate}
        \item $\alpha$ such that $f$ does not vanish on $\alpha + p^m \mathbb{Z}_p$
        \item $\alpha$ such that $f(\alpha) = 0$ but $f'(\alpha) \neq 0$
        \item $\alpha$ such that $f(\alpha) = f'(\alpha) = 0$. 
    \end{enumerate}
In the first two cases, we indeed have  $\int_{\alpha + p^m \mathbb{Z}_p} \vert f(x) \vert_p^s \vert dx \vert =  \int_{\alpha + p^m \mathbb{Z}_p} \vert f_D(x) \vert_p^s \vert dx \vert$ for all $D$ sufficiently large. Hence if $f$ and $f'$ have no common zeros, we will get a positive answer. If an $\alpha$ of the third type exists, this conclusion no longer holds. The counterexample discussed in the previous section already illustrates this point. Note that in that case, we have $f(1) = f'(1) = 0$. 
\subsection{Away from a zero}\label{away from zero}
Consider a neighborhood $\alpha + p^m \mathbb{Z}_p$ such that $f$ does not vanish on it. Since closed balls are compact in the $p$-adic topology, the continuous function $\vert f \vert_p: \mathbb{Z}_p \rightarrow \mathbb{R}$ will reach its minimum on the compact set $\alpha + p^m \mathbb{Z}_p$. This cannot be zero by the choice of the neighbourhood, so we denote it with $\epsilon > 0$. Choose $D_{\alpha}$ large enough such that $\vert a_i \vert_p < \epsilon$ for all $i \geq D_a$. Consequently, for all $D > D_{\alpha}$
    \begin{align*}
        \vert(f-f_D)(x)\vert_p = \vert \sum_{i = D+1}^{\infty}a_i x^{i}\vert_p < \epsilon.
    \end{align*}
Fix $x \in \alpha + p^m\mathbb{Z}_p$. Then $\vert f(x) \vert_p \geq \epsilon$. If $\vert f_D(x) \vert_p < \epsilon$, it would follow that $\epsilon \leq \vert f(x) \vert_p  \leq \max( \vert f_D(x) \vert_p, \vert (f-f_D)(x)\vert_p) < \epsilon$. This is a contradiction, so we may conclude $\vert f_D(x) \vert_p \geq \epsilon > \vert (f-f_D)(x)\vert_p$. Consequently
    \begin{align*}
        \vert f(x) \vert_p  = \max( \vert f_D(x) \vert_p, \vert (f-f_D)(x)\vert_p) = \vert f_D(x) \vert_p.
    \end{align*}
We have equality of the integrands, so their integrals will be equal as well:
    \begin{equation*}
        \int_{\alpha + p^m \mathbb{Z}_p}\vert f(x) \vert_p^s  \vert dx \vert = \int_{\alpha + p^m \mathbb{Z}_p}\vert f_D(x) \vert_p^s  \vert dx \vert.
    \end{equation*}
\subsection{In the neighborhood of a zero with nonzero derivative}
The goal is to compute integrals of the form $\int_{\alpha + p^m \mathbb{Z}_p} \vert f(x) \vert_p^s \vert dx \vert$. After a change of variables $x \rightarrow \alpha + p^mx$, we get
    \begin{equation*}
        \int_{\alpha + p^m\mathbb{Z}_p} \vert f(x) \vert_p^s \vert dx \vert = p^{-m} \int_{\mathbb{Z}_p} \vert f(\alpha + p^m x) \vert_p^s \vert dx \vert. 
    \end{equation*}
We can formally define the derivative of $f$ as $f'(x) = \sum_{i \geq n}ia_iX^{i-1}$. Because $\vert a_i \vert_p \rightarrow 0$ as $i \rightarrow \infty$, we definitely have $\vert i a_i \vert_p \rightarrow 0$ as $i \rightarrow \infty$ and thus $f'(x) \in \mathbb{Z}_p \{X\}$. Inductively we can define any derivative $f^{(k)} \in \mathbb{Z}_p \{X\}$. Then we have the Taylor series
    \begin{align*}
        f(\alpha + p^m x) 
        & = f(\alpha) + p^m f^{(1)}(\alpha) x + p^{2m} \frac{f^{(2)}(\alpha)}{2!}x^2 + p^{3m} \frac{f^{(3)}(\alpha)}{3!}x^3 + \dots
    \end{align*}
Let $n_0$ be the smallest natural number so that $f^{(n_0)}(\alpha) \neq 0$. If we choose $m$ large enough so that $v_p(\frac{f^{(n_0)}}{n_0!}(\alpha)) < m$, the coefficient of the term $p^{m n_0}\frac{f^{(n_0)}}{n_0!}(\alpha)x^{n_0}$ will have a uniquely maximal $p$-adic norm among all the coefficients of $f(\alpha + p^m x)$. Consequently, 
    \begin{align}\label{norm_f_neighbourhood}
        \vert f(\alpha + p^m x) \vert_p = \vert p^{m n_0}\frac{f^{(n_0)}}{n_0!}(\alpha) x^{n_0}\vert_p
    \end{align}
for all $x \in \mathbb{Z}_p$. 
 For readability, we denote $b = \frac{f^{(n_0)}}{n_0!}(\alpha)$. We can then calculate
    \begin{align*}
        \int_{\alpha + p^m\mathbb{Z}_p} \vert f(x) \vert_p^s \vert dx \vert 
        & = p^{-m} \int_{\mathbb{Z}_p} \vert f(\alpha + p^m x) \vert_p^s \vert dx \vert \\
        & = p^{-m} \int_{\mathbb{Z}_p} \vert p^{m n_0}b x^{n_0}\vert_p^s \vert dx \vert \\
        & =  p^{-m}p^{-mn_0 s}\vert b\vert_p^s \int_{\mathbb{Z}_p} \vert x^{n_0}\vert_p^s \vert dx \vert.
    \end{align*}
By Example \ref{integral_x^k}, we know that $ \int_{\mathbb{Z}_p} \vert x^{n_0}\vert_p^s \vert dx \vert = \big( 1 - \frac{1}{p} \big) \frac{1}{1-p^{-(n_0s+1)}}$ and thus
    \begin{align}\label{integral_f_neighbourhood}
        \int_{\alpha + p^m\mathbb{Z}_p} \vert f(x) \vert_p^s \vert dx \vert 
        = (p^{-m}  -p^{-m+1}) \frac{p^{-mn_0 s}\vert b\vert_p^s}{1-p^{-(n_0s+1)}}.
    \end{align}
Note that this only depends on $n_0$ and $\vert b\vert_p = \vert \frac{f^{(n_0)}(\alpha)}{n_0!} \vert_p$. \\ \\
For this case, we consider $n_0 = 1$. Denote $e = v_p(f'(\alpha))$. To use the discussion above, fix $m > e$. We want to use some version of Hensel's Lemma.
    \begin{lemma}{(Hensel)}
        Let $f \in \mathbb{Z}_p\{X\}$ be an analytic series in one variable $X$ over $\mathbb{Z}_p$. Let $a \in \mathbb{Z}_p$ such that $f(a) \equiv 0 \mod p$ and $f'(a) \not \equiv 0 \mod p$. Then there exists a unique $\xi \in \mathbb{Z}_p$ such that
            \begin{align*}
                f(\xi) = 0  \quad \text{and} \quad \xi \equiv a \mod p.
            \end{align*}
    \end{lemma}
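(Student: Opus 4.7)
The plan is to mimic the classical Newton-iteration proof of Hensel's Lemma, using the fact (already established just above the lemma statement) that every formal derivative $f^{(k)}$ lies in $\mathbb{Z}_p\{X\}$. Since $\frac{f^{(k)}(X)}{k!}$ has $X^{i-k}$-coefficient $\binom{i}{k}a_i \in \mathbb{Z}_p$, the normalized derivatives are also in $\mathbb{Z}_p\{X\}$, and the Taylor expansion
\begin{equation*}
f(\alpha + h) = \sum_{k \geq 0} \frac{f^{(k)}(\alpha)}{k!}\, h^{k}
\end{equation*}
converges in $\mathbb{Z}_p$ for all $\alpha, h \in \mathbb{Z}_p$.

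I would define the sequence $\xi_0 = a$, $\xi_{n+1} = \xi_n - f(\xi_n)/f'(\xi_n)$, and prove by induction on $n$ that
(i) $v_p(f'(\xi_n)) = 0$, so $\xi_{n+1}$ is well-defined in $\mathbb{Z}_p$;
(ii) $v_p(f(\xi_n)) \geq 2^n$;
(iii) $\xi_n \equiv a \pmod{p}$.
For the induction step, Taylor-expand $f$ at $\xi_n$: the constant and linear terms cancel by the definition of $\xi_{n+1}$, and since $v_p(\xi_{n+1}-\xi_n) = v_p(f(\xi_n)) \geq 2^n$, the remaining sum has valuation at least $2^{n+1}$, giving (ii). The analogous expansion for $f'$ at $\xi_n$ shows $v_p(f'(\xi_{n+1}) - f'(\xi_n)) \geq 2^n \geq 1$, so by the ultrametric inequality $v_p(f'(\xi_{n+1})) = v_p(f'(\xi_n)) = 0$, which yields (i). Finally (iii) is immediate from $\xi_{n+1} - \xi_n \in p\mathbb{Z}_p$.

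From (ii), $\{\xi_n\}$ is Cauchy in the complete space $\mathbb{Z}_p$ and converges to some $\xi \in \mathbb{Z}_p$ with $\xi \equiv a \pmod{p}$. Continuity of $f$ on $\mathbb{Z}_p$ (which follows from the uniform bound $\vert a_i \vert_p \to 0$) gives $f(\xi) = \lim_n f(\xi_n) = 0$. For uniqueness, suppose $\eta \in \mathbb{Z}_p$ also satisfies $f(\eta) = 0$ and $\eta \equiv a \pmod p$. Set $\delta = \eta - \xi \in p\mathbb{Z}_p$ and Taylor-expand at $\xi$:
\begin{equation*}
0 = f(\eta) = f(\xi) + f'(\xi)\delta + \sum_{k \geq 2} \frac{f^{(k)}(\xi)}{k!}\delta^{k}.
\end{equation*}
By continuity of $f'$ and (i), $v_p(f'(\xi)) = 0$, so the linear term has valuation exactly $v_p(\delta)$, while every higher term has valuation at least $2v_p(\delta)$. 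If $\delta \neq 0$, the ultrametric inequality forces the total to have valuation exactly $v_p(\delta) < \infty$, contradicting that the sum is $0$; hence $\eta = \xi$.

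I do not expect any real obstacle: the proof is the textbook one, and the only extension over the polynomial case is ensuring the Taylor series converges and that its coefficients stay in $\mathbb{Z}_p\{X\}$, both of which are handled by the restricted power series hypothesis. The one point where one must be slightly careful is passing to the limit in $f(\xi_n)$ and $f'(\xi_n)$, but this is immediate from the estimate $\vert f(x) - f(y) \vert_p \leq \vert x - y \vert_p \cdot \sup_i \vert i a_i \vert_p$ valid on $\mathbb{Z}_p$.
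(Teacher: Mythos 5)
Your proof is correct, and it is essentially the argument the paper is pointing to: the paper gives no proof of its own, but simply cites Koblitz and asserts that the polynomial proof (Newton iteration) carries over to $\mathbb{Z}_p\{X\}$; you have written out that extension explicitly, checking precisely the points where the restricted-power-series hypothesis enters (validity of the Taylor rearrangement, integrality of the divided derivatives $\tfrac{f^{(k)}}{k!}$ via $\binom{i}{k}a_i\in\mathbb{Z}_p$, and continuity of $f$ and $f'$). One small inaccuracy in your closing remark: the Lipschitz bound should read $\vert f(x)-f(y)\vert_p \le \vert x-y\vert_p\cdot\sup_i\vert a_i\vert_p$, not $\sup_i\vert ia_i\vert_p$; since $\vert x^{i-1}+\cdots+y^{i-1}\vert_p\le 1$ (it is a sum of $i$ terms each of norm at most $1$, and the ultrametric inequality gives no factor of $\vert i\vert_p$), the extra factor of $i$ is unjustified, as the example $f(x)=x^p$ with $x=1$, $y=0$ shows. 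This does not affect the proof, as plain continuity of $f$ and $f'$ is all you use.
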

\begin{proof}
    See for example \cite{koblitz2012p} pp16-17. Note that here the proof is for polynomials $f \in \mathbb{Z}_p[X]$, but the proof also works for $f \in \mathbb{Z}_p\{X\}$. 
\end{proof}
\noindent
We have the following variation:
    \begin{lemma}\label{generalised hensel}
        Let $a \in \mathbb{Z}_p$ such that $f(a) \equiv 0 \mod p^{2e+k}$ and $v_p(f'(a)) = e$. Then there exists a unique $\xi$ such that
            \begin{align*}
                f(\xi) = 0 \quad \text{and} \quad \xi \equiv a \mod p^{e+k}
            \end{align*}
    \end{lemma}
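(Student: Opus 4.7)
The plan is to prove the lemma by $p$-adic Newton iteration, adapting the classical proof of the strong form of Hensel's lemma to the restricted power series setting. Starting from $a_0 := a$, I would define the sequence recursively by $a_{n+1} = a_n - f(a_n)/f'(a_n)$. The key analytic input is already available from the previous subsection: for any $g \in \mathbb{Z}_p\{X\}$ and $c, h \in \mathbb{Z}_p$ one has the Taylor expansion $g(c+h) = \sum_{j \ge 0} \frac{g^{(j)}(c)}{j!} h^j$, and every coefficient $\frac{g^{(j)}(c)}{j!}$ lies in $\mathbb{Z}_p$ since the divided Taylor coefficients of $X^i$ involve only the integer factor $\binom{i}{j}$. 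This guarantees in particular that the iteration stays inside $\mathbb{Z}_p$.

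The heart of the argument is a simultaneous induction on $n$ establishing the three statements $v_p(a_n - a) \ge e + k$, $v_p(f'(a_n)) = e$, and $v_p(f(a_n)) \ge 2e + k + n$. Setting $\delta_n := a_{n+1} - a_n = -f(a_n)/f'(a_n)$, the inductive hypothesis yields $v_p(\delta_n) \ge e + k + n$. Expanding $f(a_n + \delta_n)$ by Taylor, the first two terms cancel by the very definition of $\delta_n$, and the remaining tail has valuation at least $2 v_p(\delta_n)$, giving $v_p(f(a_{n+1})) \ge 2(e + k + n) \ge 2e + k + (n+1)$ as soon as $k \ge 1$. A similar Taylor expansion of $f'$ together with $v_p(\delta_n) > e$ and the ultrametric inequality produce $v_p(f'(a_{n+1})) = e$. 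Finally $v_p(a_{n+1} - a) \ge e + k$ follows from the inductive bound on $a_n - a$ together with the bound on $\delta_n$.

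Once this induction is in hand, $v_p(a_{n+1} - a_n) \to \infty$ shows that $(a_n)$ is Cauchy, hence converges to some $\xi \in \mathbb{Z}_p$. Continuity of $f$ combined with $v_p(f(a_n)) \to \infty$ forces $f(\xi) = 0$, and $\xi \equiv a \pmod{p^{e+k}}$ follows by passing to the limit. For uniqueness, suppose $\xi'$ is a second root with $\xi' \equiv a \pmod{p^{e+k}}$, and set $h = \xi' - \xi$, so $v_p(h) \ge e + k$. Since $v_p(\xi - a) \ge e + k > e$, the Taylor expansion of $f'$ yields $v_p(f'(\xi)) = e$. Expanding $0 = f(\xi + h) = f'(\xi) h + \sum_{j \ge 2} \frac{f^{(j)}(\xi)}{j!} h^j$, the dominant term $f'(\xi) h$ has valuation $e + v_p(h) < 2 v_p(h)$, so the equality can only hold if $h = 0$.

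The step I expect to require the most care is the bookkeeping in the Taylor tails: one needs to verify cleanly that $\sum_{j \ge 2} \frac{f^{(j)}(c)}{j!} h^j$ really has valuation at least $2 v_p(h)$ in the restricted power series setting. This reduces to the facts that each Taylor coefficient already lies in $\mathbb{Z}_p$ and that $v_p(h^j) = j\, v_p(h) \ge 2 v_p(h)$ for $j \ge 2$, combined with the ultrametric inequality, but it is worth spelling out explicitly so that every step of the induction and the uniqueness argument goes through without ambiguity.
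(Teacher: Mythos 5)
Your argument is correct, but it is a genuinely different proof from the one in the paper. You run the Newton iteration $a_{n+1}=a_n-f(a_n)/f'(a_n)$ directly and establish convergence and uniqueness from scratch, whereas the paper rescales: it sets $g(y)=p^{-2e-(k-1)}f(a+p^{e+k-1}y)$, checks via the Taylor expansion that $g\in\mathbb{Z}_p\{y\}$ with $g(0)\equiv 0$ and $g'(0)\not\equiv 0\bmod p$, and then invokes the basic form of Hensel's lemma (already stated for restricted power series) to get a unique $\eta\equiv 0\bmod p$ with $g(\eta)=0$, finally setting $\xi=a+p^{e+k-1}\eta$. The paper's reduction is shorter because it inherits both existence and uniqueness from the quoted lemma; your route is self-contained, makes the rate of convergence explicit ($v_p(f(a_n))\ge 2e+k+n$, in fact quadratic), and spells out the uniqueness argument rather than transporting it through the change of variables. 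Both proofs rest on the same analytic input, namely that the divided Taylor coefficients $f^{(j)}(c)/j!$ of a restricted power series lie in $\mathbb{Z}_p$, which you rightly flag as the point needing care; the paper uses it implicitly when it writes $f(a+p^{e+k-1}y)=f(a)+p^{e+k-1}yf'(a)+(p^{e+k-1}y)^2h(y)$ with $h\in\mathbb{Z}_p\{y\}$. One small remark: your induction step needs $k\ge 1$ (as you note); this is not written in the statement of the lemma but is implicit in it --- the paper's proof likewise uses $p^{e+k-1}$ --- and it holds in the only application, where $k=m-e$ with $m>e$.
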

\begin{proof}
    Suppose $f$ and $a$ are such as in the Lemma. Consider $g(y) \in \mathbb{Q}_p\{y\}$ defined by
        \begin{equation*}
            g(y) = p^{-2e-(k-1)}f(a + p^{e+k-1}y).
        \end{equation*}
    Its Taylor expansion at $y= 0$ yields $h(y) \in \mathbb{Z}_p \{y\}$ such that
        \begin{equation*}
            g(y) = p^{-2e-(k-1)}\Big(f(a) + (p^{e+k-1}y)f'(a) + (p^{e+k-1}y)^2h(y)\Big).
        \end{equation*}
    Using $f(a) \equiv 0 \mod p^{2e+k}$ and $v_p(f'(a)) = e$, we see that actually $g(y) \in \mathbb{Z}_p\{y\}$. Moreover $g(0) \equiv 0 \mod p$ and $g'(0) = p^{-e}f(a) \not \equiv 0 \mod p$. 
    So we can apply Hensel's Lemma to find a unique $\eta \in \mathbb{Z}_p$ such that $\eta \equiv 0 \mod p$ and $g(\eta) = 0$. Then $\xi = a + p^{e+k-1}\eta$ satisfies $f(\xi) = 0$ and $\xi \equiv a \mod p^{e+k}$.
\end{proof}
\noindent
Now is the idea to choose $N$ large enough such that for all $D \geq N$ there exists $\xi_D \in \mathbb{Z}_p$ satisfying $f_D(\xi_D) = 0$, $v_p( f_D'(\xi_D)) = v_p(f'(\alpha)) = e$ and $\xi_D + p^m \mathbb{Z}_p = \alpha + p^{m} \mathbb{Z}_p$. 
 Then by (\ref{integral_f_neighbourhood}) we can conclude
        \begin{align*}
             \int_{\alpha + p^m\mathbb{Z}_p} \vert f_D(x) \vert_p^s \vert dx \vert 
            & = \int_{\xi_D + p^m\mathbb{Z}_p} \vert f_D(x) \vert_p^s \vert dx \vert \\
            &= (p^{-m}  -p^{-m+1}) \frac{p^{-ms}\vert f_D'(\xi_D)\vert_p^s}{1-p^{-(s+1)}} \\
            & = (p^{-m}  -p^{-m+1}) \frac{p^{-ms}\vert f(\alpha)\vert_p^s}{1-p^{-(s+1)}} \\
            &  = \int_{\alpha + p^m\mathbb{Z}_p} \vert f(x) \vert_p^s \vert dx \vert. 
        \end{align*}
Consider $D$ such that $v_p(a_i) > e+m$ for all $i \geq D$. Then $f_D \equiv f \mod p^{e+m}$ and also $f_D' \equiv f' \mod p^{e+1}$. Consequently, it must be that
    \begin{align*}
         & f_D(\alpha) \equiv f(\alpha) \mod p^{e+m} \equiv 0 \mod p^{e+m}\\
         & v_p(f_D'(\alpha)) = e.
    \end{align*}
Therefore, we can use Lemma \ref{generalised hensel} for $k = m-e$ to get $\xi_D \in \mathbb{Z}_p$ such that
    \begin{align*}
        f_D(\xi_D) = 0 \quad \text{and} \quad \xi \equiv \alpha \mod p^{m}.
    \end{align*}
Since $m > e$ and thus $\xi_D \equiv \alpha \mod p^{e+1}$ we have $f_D'(\xi_D) \equiv f_D(\alpha) \mod p^{e+1}$. We may conclude
    \begin{equation*}
        v_p(f_D'(\xi_D)) = e.
    \end{equation*}
This $\xi_D$ is exactly what we needed, so we are done. \\ \\
All together we get the following positive result.
    \begin{proposition}\label{result_one_variable_nondegenerate}
        Let $f \in \mathbb{Z}_p \{X\}$ such that $f$ and $f'$ have no common zeros. Then there exists $N$ such that for all $D \geq N$, the truncation $f_D$ has the same Igusa Zeta function as $f$.
    \end{proposition}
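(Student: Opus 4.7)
The plan is to reduce the global integral $Z_f(s)$ to a finite sum of local integrals over small balls, each handled by one of the two cases already established in Subsection~\ref{away from zero} and in the argument surrounding Lemma~\ref{generalised hensel}. The first and most delicate step is to control the zero set $V(f) = \{x \in \mathbb{Z}_p : f(x) = 0\}$. Because $f$ and $f'$ have no common zeros, at every $\alpha \in V(f)$ the derivative is nonzero, so Equation~(\ref{norm_f_neighbourhood}) applied with $n_0 = 1$ and any integer $m > v_p(f'(\alpha))$ gives $|f(\alpha + p^m x)|_p = |p^m f'(\alpha) x|_p$, which vanishes only at $x = 0$. Hence $\alpha$ is isolated in $V(f)$, and since $V(f)$ is a closed, hence compact, discrete subset of $\mathbb{Z}_p$, it must be finite; write $V(f) = \{\alpha_1, \dots, \alpha_r\}$ and $e_i = v_p(f'(\alpha_i))$.

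Next, I fix an integer $m$ large enough that $m > \max_i e_i$ and that the balls $\alpha_i + p^m \mathbb{Z}_p$ are pairwise disjoint, and partition $\mathbb{Z}_p$ into the $p^m$ cosets $\beta + p^m \mathbb{Z}_p$, with $\beta$ ranging over a set of representatives of $\mathbb{Z}_p / p^m \mathbb{Z}_p$. Exactly $r$ of these cosets contain one of the $\alpha_i$, and the remaining $p^m - r$ cosets are disjoint from $V(f)$, so on each of them $|f|_p$ attains a positive minimum. Subsection~\ref{away from zero} then supplies, for each zero-free coset $\beta + p^m \mathbb{Z}_p$, a threshold $N_\beta$ such that the integrals of $|f|_p^s$ and of $|f_D|_p^s$ over that coset coincide for all $D \geq N_\beta$. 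For each of the $r$ zero cosets $\alpha_i + p^m \mathbb{Z}_p$, the argument based on Lemma~\ref{generalised hensel} produces a root $\xi_D^{(i)} \in \alpha_i + p^m \mathbb{Z}_p$ of $f_D$ with $v_p(f_D'(\xi_D^{(i)})) = e_i$ for all $D$ beyond some threshold $N_i$, and Equation~(\ref{integral_f_neighbourhood}) applied to both $f$ and $f_D$ then forces equality of the two local integrals for such $D$.

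Taking $N$ to be the maximum of the finitely many thresholds $N_\beta$ and $N_i$, and summing the local equalities over the partition, yields $Z_f(s) = Z_{f_D}(s)$ for all $D \geq N$. The only genuine obstacle I anticipate is establishing the finiteness of $V(f)$, which I expect to handle via the isolated-zero argument above; once that is in place, the rest is bookkeeping on top of the local analyses carried out earlier in the section.
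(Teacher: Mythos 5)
Your proposal is correct and follows the same overall architecture as the paper's proof: partition $\mathbb{Z}_p$ into the cosets modulo $p^m$, treat the finitely many cosets meeting the zero set via the Hensel-type argument around Lemma~\ref{generalised hensel} and Equation~(\ref{integral_f_neighbourhood}), treat the remaining cosets via Subsection~\ref{away from zero}, and take the maximum of the finitely many thresholds. The one place you genuinely diverge is the finiteness of the zero set: the paper simply cites an external theorem ([\cite{cluckers2011fields}, Theorem 5.5.2]) which gives finiteness of the zero set of \emph{any} nonzero restricted power series, whereas you derive finiteness from the non-degeneracy hypothesis itself, observing that $f'(\alpha) \neq 0$ forces each zero to be isolated (via Equation~(\ref{norm_f_neighbourhood}) with $n_0 = 1$) and then using compactness of the closed set $V(f) \subset \mathbb{Z}_p$. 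Your route is more self-contained and elementary, at the cost of proving a weaker finiteness statement that happens to suffice here; the paper's citation is shorter but imports machinery (and establishes finiteness without the hypothesis on $f'$). Both are valid, and the rest of your bookkeeping matches the paper's.
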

\begin{proof}
    From [\cite{cluckers2011fields}, Theorem 5.5.2] we may conclude $f$ only has finitely many roots in $\mathbb{Z}_p$. For each root $\alpha$, we can find $m_{\alpha}$ and $D_{\alpha}$ such that for all $m > m_{\alpha}$ and all $D \geq D_{\alpha}$
        \begin{equation*}
            \int_{\alpha + p^m \mathbb{Z}_p} \vert f(x) \vert_p^s \vert dx \vert= \int_{\alpha + p^m \mathbb{Z}_p} \vert f_D(x) \vert_p^s \vert dx \vert. 
        \end{equation*}
    Fix $m > \max{(m_{\alpha})}$. Denote
        \begin{align*}
            & V \coloneqq \{ a \in \frac{\mathbb{Z}}{p^m \mathbb{Z}_p} \, \vert \, a + p^m \mathbb{Z}_p = \alpha + p^m \mathbb{Z}_p \, \text{for some root $\alpha$}\} \\
            & S \coloneqq \frac{\mathbb{Z}}{p^m \mathbb{Z}_p}  \setminus V
        \end{align*}
    Since $V \cup S = \frac{\mathbb{Z}}{p^m \mathbb{Z}_p}$, we have the equality
        \begin{equation*}
            \int_{\mathbb{Z}_p} \vert f(x) \vert_p^s \vert dx \vert = \sum_{a \in V} \int_{a + p^m\mathbb{Z}_p} \vert f(x) \vert_p^s \vert dx \vert + \sum_{a \in S} \int_{a + p^m\mathbb{Z}_p} \vert f(x) \vert_p^s \vert dx \vert
        \end{equation*}
    For $a \in S$, note that $f$ does not vanish on $a + p^m \mathbb{Z}_p$. By the discussion above, there exists $D_a$ such that for all $D \geq D_a$
        \begin{equation*}
            \int_{a + p^m\mathbb{Z}_p} \vert f(x) \vert_p^s \vert dx \vert = \int_{a + p^m\mathbb{Z}_p} \vert f_D(x) \vert_p^s \vert dx \vert .       \end{equation*}
    So for all $D > \max{(D_{\alpha}, D_a)}$ we indeed get
        \begin{equation*}
             \int_{\mathbb{Z}_p} \vert f(x) \vert_p^s \vert dx \vert =  \int_{\mathbb{Z}_p} \vert f_D(x) \vert_p^s \vert dx \vert.
        \end{equation*}
    \end{proof}

\section{non-degenerate case in multiple variables}
\noindent
This section closely follows the paper \cite{denef2001newton} by Denef and Hoornaert. Sections \ref{Definitions}-\ref{Why being non-degenerated makes it easier to calculate the Igusa Zeta function} primarily recap material already presented there. We include selected proofs and results from that paper to help the reader in understanding the core ideas. In Section \ref{you can cut off}, we establish a result analogous to Proposition \ref{result_one_variable_nondegenerate}, but in the setting of $p$-adic analytic functions that are non-degenerate over $\mathbb{F}_p$ with respect to the faces of their Newton polyhedron.
\subsection{Definitions and Newton polyhedra}\label{Definitions}
An important class of p-adic analytic functions are the ones that are non-degenerated over $\mathbb{F}_p$ with respect to all the faces of their Newton polyhedron. In order to define this, we first need the following definitions:
    \begin{definition}\label{newton_polyhedron}
         Let $f(x) = f(x_1, \dots, x_n) = \sum_{\omega \in \mathbb{N}^n} a_{\omega}x^{\omega}$ be a non-zero analytic series over $\mathbb{Z}_p$ with $f(0) = 0$. Let $\mathbb{R}^{+} = \{x \in \mathbb{R} \, \vert \, x \geq 0\}$ and $\text{supp}(f) = \{\omega \in \mathbb{N}^n \, \vert \, a_{\omega} \neq 0\}$.The Newton polyhedron $\Gamma(f)$ of $f$ is defined as the convex hull in $(\mathbb{R}^{+})^n$ of the set
            \begin{equation*}
                \bigcup_{\omega \in \text{supp}(f)}\omega + (\mathbb{R}^{+})^n.
            \end{equation*} 
    \end{definition}

    \begin{definition}\label{face_poly}
        Let $p$ be a prime number. Let $f$ be as in Definition \ref{newton_polyhedron}. For every face $\tau$ of the Newton polyhedron $\Gamma(f)$ of $f$, we define
            \begin{equation*}
                f_{\tau}(x) = \sum_{\omega \in \tau}a_{\omega}x^{\omega}
            \end{equation*}
        and $\overline{f}_\tau(x)$ with coefficients in $\mathbb{F}_p$ by reducing each coefficient $a_{\omega}$ of $f_{\tau}$ modulo $p\mathbb{Z}_p$.
    \end{definition}
\noindent
Having these notions, we are ready to define what it means to be non-degenerate over $\mathbb{F}_p$ (or $\mathbb{Q}_p$) with respect to all the faces of the Newton polyhedron:

\begin{definition}
    Let $f$ and $f_{\tau}$ be as in definitions \ref{newton_polyhedron} and  \ref{face_poly}. We say $f$ is non-degenerated over $\mathbb{F}_p$ with respect to all the faces of its Newton polyhedron $\Gamma(f)$ if for every face $\tau$ of $\Gamma(f)$ the locus of the polynomial $\overline{f}_\tau(x)$ has no singularities in $(\mathbb{F}_p^{\times})^n$ or equivalently, the set of congruences 
        \begin{equation*}
            \begin{cases}
                f_{\tau}(x) \, \, \equiv 0 \mod p \\
                \frac{\partial f_{\tau}}{\partial x_i}(x) \equiv 0 \mod p, \quad  i = 1, \dots, n
            \end{cases}
        \end{equation*}
    has no solution in $(\mathbb{Z}_p^{\times})^n$.
    \end{definition}
\noindent
Next, we introduce the notion of the cone associated to a face $\tau$, because we will need this later on. 
    \begin{definition}
        Let $f$ be as in Definition \ref{newton_polyhedron}. For $a \in (\mathbb{R}^{+})^n$, we define
            \begin{equation*}
                m(a) = \inf_{y \in \Gamma(f)} \{a \cdot y \}.
            \end{equation*}
    \end{definition}
    \begin{definition}
        Let $f$ be as in Definition \ref{newton_polyhedron} and $a \in (\mathbb{R}^{+})^n$. We define the first meet locus of $a$ as
            \begin{equation*}
                F(a) = \{ z \in \Gamma(f) \, \vert \, a \cdot z = m(a)\}.
            \end{equation*}
    \end{definition}
    \noindent
    {\textbf{Fact.}}
        $F(a)$ is a face of $\Gamma(f)$. In particular, $F(0) = \Gamma(f)$ and $F(a)$ is a proper face of $\Gamma(f)$, if $a \neq 0$. Moreover $F(a)$ is a compact face iff $a \in (\mathbb{R}^{+}\setminus \{0\})^n$.
    \begin{definition}
        We define an equivalence relation on $(\mathbb{R}^{+})^n$ by
            \begin{equation*}
                a \sim a' \iff F(a) = F(a').
            \end{equation*}
        If $\tau$ is a face of $\Gamma(f)$, we define the cone associated to $\tau$ as
            \begin{equation*}
                \Delta_{\tau} = \{a \in (\mathbb{R}^{+})^n \, \vert \, F(a) = \tau \}.
            \end{equation*}
    \end{definition}
\noindent
Observe that the cones associated to the faces of $\Gamma(f)$ form a partition of $(\mathbb{R}^{+})^n$. So $(\mathbb{R}^{+})^n$ is equal to the disjoint union
    \begin{equation}\label{cones_associated_to_faces_partition}
        \{0\} \cup \bigcup_{\substack{\tau \, \text{proper face} \\ \text{of}\, \Gamma(f)}} \Delta_\tau.
    \end{equation}
Consequently, for every $k \in \mathbb{N}^n$, there is exactly one face $\tau$ such that $k \in \Delta_{\tau}$. This means that $F(k) = \tau$ or thus $ \tau = \{ z \in \Gamma(f) \, \vert \, a \cdot z = m(k)\}$.
Then also
    \begin{equation*}
        \tau \cap \text{supp}(f) = \{ \omega \in \text{supp}(f) \, \vert \,  k \cdot \omega = \inf_{y \in \Gamma(f)} k \cdot y \}
    \end{equation*} 

\subsection{Why being non-degenerated makes it easier to calculate the Igusa Zeta function}\label{Why being non-degenerated makes it easier to calculate the Igusa Zeta function}
A very useful formula is the following:
    \begin{proposition}\label{proposition}
        Let $p$ be a prime number and $f(x) = f(x_1, \dots, x_n) \in \mathbb{Z}_p\{x_1, \dots, x_n\}$. Denote by $\overline{f}$ the function over $\mathbb{F}_p$ obtained by reducing each coefficient modulo $p \mathbb{Z}_p$. Let $N$ be the number of elements in the set $\{a \in (\mathbb{F}_p^{\times})^n \, \vert \, \overline{f}(a) = 0\}$. Suppose that the set of congruences
            \begin{equation*}
                \begin{cases}
                    f(x)  \, \, \, \, \equiv 0 \mod p \\
                    \frac{\partial f}{\partial x_i}(x) \equiv 0 \mod p, \quad i =1, \dots, n
                \end{cases}
            \end{equation*}
        has no solution in $(\mathbb{Z}_p^{\times})^n$. Then for $s$ a complex variable with $\Re(s) > 0$, one has that
            \begin{equation*}
                \int_{(\mathbb{Z}_p^{\times})^n} \vert f(x) \vert_p^s \vert dx \vert  = p^{-n} \Big((p-1)^{n} - pN \frac{p^s-1}{p^{s+1}-1} \Big)
            \end{equation*}
        where $\vert dx \vert $ denotes the Haar measure on $\mathbb{Q}_p^n$ so normalized that $\mathbb{Z}_p^n$ has measure 1
    \end{proposition}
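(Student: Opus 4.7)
The plan is to partition $(\mathbb{Z}_p^\times)^n$ into mod-$p$ residue classes and compute each piece separately. Writing
\begin{equation*}
(\mathbb{Z}_p^\times)^n = \bigsqcup_{a \in (\mathbb{F}_p^\times)^n} (\tilde{a} + p\mathbb{Z}_p^n),
\end{equation*}
where $\tilde{a}$ is a chosen lift of $a$ in $(\mathbb{Z}_p^\times)^n$, we split the integral as a sum over $a$ of $I_a := \int_{\tilde{a} + p\mathbb{Z}_p^n} |f(x)|_p^s |dx|$. The value of $\overline{f}(a)$ will determine which of two cases applies.

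\emph{Case 1: $\overline{f}(a) \neq 0$.} By continuity, $f(x) \equiv f(\tilde{a}) \not\equiv 0 \pmod p$ for all $x \in \tilde{a} + p\mathbb{Z}_p^n$, so $|f(x)|_p = 1$ on the ball and $I_a$ equals its measure $p^{-n}$. Exactly $(p-1)^n - N$ residue classes fall in this case.

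\emph{Case 2: $\overline{f}(a) = 0$.} The non-degeneracy hypothesis forces $\partial f/\partial x_i(\tilde{a}) \not\equiv 0 \pmod p$ for some $i$; relabel so $i = 1$. The key move is to use
\begin{equation*}
(y_1, y_2, \dots, y_n) := (f(x),\; x_2 - \tilde{a}_2,\; \dots,\; x_n - \tilde{a}_n)
\end{equation*}
as new coordinates on the ball. A $p$-adic implicit function theorem, obtained as the natural multivariate analogue of Lemma \ref{generalised hensel} by freezing $x_2, \dots, x_n$ and applying Hensel in the $x_1$ variable, shows that this map is an analytic bijection $\tilde{a} + p\mathbb{Z}_p^n \to p\mathbb{Z}_p \times (p\mathbb{Z}_p)^{n-1}$ whose Jacobian equals $\partial f/\partial x_1$ and has $p$-adic norm $1$ on the entire ball. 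The $p$-adic change-of-variables formula together with a computation along the lines of Example \ref{integral_x^k} then gives
\begin{equation*}
I_a = p^{-(n-1)} \int_{p\mathbb{Z}_p} |y_1|_p^s |dy_1| = \frac{p-1}{p^n(p^{s+1}-1)}.
\end{equation*}
There are $N$ such residue classes.

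Adding the two contributions,
\begin{equation*}
\int_{(\mathbb{Z}_p^\times)^n} |f(x)|_p^s |dx| = ((p-1)^n - N)\, p^{-n} + N \cdot \frac{p-1}{p^n(p^{s+1}-1)},
\end{equation*}
which collapses to the stated closed form after rewriting $p - 1 - (p^{s+1} - 1) = -p(p^s - 1)$ and pulling out the common factor $p^{-n}$. I expect the main obstacle to be Case 2: setting up the change of variables rigorously, verifying that the image is exactly $p\mathbb{Z}_p \times (p\mathbb{Z}_p)^{n-1}$, and checking that $|\partial f/\partial x_1|_p = 1$ persists throughout $\tilde{a} + p\mathbb{Z}_p^n$ (not merely at the centre). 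The non-degeneracy assumption is exactly what rules out the singular behaviour encountered in the one-variable counterexample of Section \ref{one_variable_counterexample}, and it is precisely what makes this change of coordinates available.
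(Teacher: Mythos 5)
Your proof is correct, and it actually supplies an argument where the paper itself only defers to Denef--Hoornaert with the remark that their polynomial proof carries over to restricted power series via Hensel's lemma. The two routes rest on the same mechanism (every zero of $\overline{f}$ in $(\mathbb{F}_p^\times)^n$ is nonsingular, so Hensel controls what happens above it), but they package it differently: Denef--Hoornaert count the solutions of $f\equiv 0 \bmod p^i$ above each nonsingular residue class (there are $p^{(i-1)(n-1)}$ of them), deduce the measure of $\{\operatorname{ord} f = i\}$, and sum the resulting geometric series; you instead straighten $f$ out by the coordinate change $x\mapsto (f(x),\,x_2-\tilde a_2,\dots,x_n-\tilde a_n)$ and reduce to the one-variable integral $\int_{p\mathbb{Z}_p}|y_1|_p^s\,|dy_1|$. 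Your version is arguably cleaner and your final arithmetic checks out, including the identity $p-1-(p^{s+1}-1)=-p(p^s-1)$. The one step you should make explicit is the measure-theoretic content of Case 2: bijectivity of the map on $\tilde a+p\mathbb{Z}_p^n$ (which you correctly get from Hensel fibrewise, since $\partial f/\partial x_1\bmod p$ depends only on $x\bmod p$ and is a unit throughout the ball) is not by itself enough to apply the change-of-variables formula -- you need that the map is bi-analytic, or at least that it carries each sub-ball $b+p^m\mathbb{Z}_p^n$ bijectively onto a ball of the same measure because the Jacobian is a unit. On a ball of radius $1/p$ this follows from the same Taylor-expansion estimate underlying Lemma \ref{generalised hensel}, so it is a routine verification, but as written it is the only place where your argument leans on an unproved assertion; if you prefer to avoid the analytic change-of-variables machinery altogether, the Denef--Hoornaert counting argument is the more elementary substitute.
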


    \begin{proof}
        A proof of this is given in (\cite{denef2001newton}) in case $f$ is a polynomial. It is based on Hensel's Lemma, so since Hensel's Lemma also holds for analytic functions, this proof can be copied for $f$ an analytic series. 
    \end{proof}
\noindent
Note that if $f$ is non-degenerated over $\mathbb{F}_p$ with respect to all the faces of its Newton polyhedron, then for any analytic function $g$, it still holds that for every face $\tau$ of $\Gamma(f)$, the set of congruences
   \begin{equation*}
        \begin{cases}
            f_{\tau}(x) + pg(x) \, \,\equiv 0 \mod p \\
            \frac{\partial (f_{\tau} + pg)}{\partial x_i}(x) \quad \, \,  \equiv 0 \mod p, \quad i = 1, \dots, n
            \end{cases}
        \end{equation*}
has no solutions in $(\mathbb{Z}_p^{\times})^n$. Hence we can use the previous Proposition to calculate integrals of the form
    \begin{equation*}
       \int_{\mathbb{Z}_p^{\times}} \vert f_{\tau}(u) + p \tilde{f}_{\tau}(u) \vert_p^s \vert du \vert.
    \end{equation*}
The trick is then the following: in the previous section, we showed that for every $k = (k_1, \dots, k_n) \in \mathbb{N}^n$, there is a unique face $\tau$ of the Newton polyhedron such that
    \begin{equation*}
        \tau \cap \text{supp}(f) = \{ \omega \in \text{supp}(f) \, \vert \,  k \cdot \omega = \inf_{y \in \Gamma(f)} k \cdot y \},
    \end{equation*}
i.e. for $\omega \in \text{supp}(f)$, $k \cdot \omega$ reaches the infimum $m(k) = \inf_{y \in \Gamma(f)} k \cdot y$ if and only if $\omega \in \tau$. Now for $x \in \mathbb{Z}_p^n$ with $\text{ord} \,x=k$, we may write $x = p^k u$ for some unit $u \in (\mathbb{Z}_p^{\times})^n$. Denoting $\sigma(k) = k_1+\dots + k_n$, we have
    \begin{align*}
        &\vert dx \vert = p^{-\sigma(k)} \vert du \vert \\
        &x^{\omega} = p^{k \cdot \omega }u^{\omega}.
    \end{align*}
If $\omega \in \tau$, we get that $v_p(x^{\omega}) = k \cdot \omega = m(k)$ and hence $a_{\omega}x^{\omega}$ is certainly divisible by $p^{m(k)}$.
If $\omega \not \in \tau$, we get that $v_p(x^{\omega}) = k \cdot \omega > m(k)$ and hence $a_{\omega}x^{\omega}$ is certainly divisible by $p^{m(k) + 1}$. Consequently
    \begin{align*}
        f(x) 
        & = \sum_{\omega \in \text{supp}(f)}a_{\omega}x^{\omega} \\
        & = \sum_{\omega \in \text{supp}(f) \cap \tau}a_{\omega}x^{\omega} + \sum_{\omega \in \text{supp}(f) \setminus \tau}a_{\omega}x^{\omega} \\
        & = p^{m(k)}\Big(\sum_{\omega \in \text{supp}(f) \cap \tau}a_{\omega}u^{\omega}\Big) + p^{m(k)+1}\Big(\sum_{\omega \in \text{supp}(f) \setminus \tau}a_{\omega}p^{k \cdot \omega - m(k)-1}u^{\omega} \Big)\\
        & = p^{m(k)}(f_{\tau}(u) + p \tilde{f_\tau}(u)).
    \end{align*}
So we find that
    \begin{equation*}
        \int_{\substack{x \in (\mathbb{Z}_p)^n \\ v_p(x) = k}} \vert f(x) \vert_p^s \vert dx \vert = p^{-\sigma(k)-m(k) s}\int_{(\mathbb{Z}_p^{\times})^n} \vert f_{\tau}(u) + p \tilde{f_\tau}(u) \vert_p^s \vert du \vert
    \end{equation*}
and the latter is easily computed by Proposition \ref{proposition}.
Since
    \begin{equation*}
          \int_{x \in \mathbb{Z}_p^n } \vert f(x) \vert_p^s \vert dx \vert = \sum_{k \in \mathbb{N}^n} \int_{\substack{x \in \mathbb{Z}_p^n \\ v_p(x) = k}} \vert f(x) \vert_p^s \vert dx \vert,
    \end{equation*}
this should convince you that the Igusa Zeta function of analytic functions that are non-degenerated with respect to their Newton polynomial are easily compute using this method. More specifically, we have the following:
Remember that the cones associated to the faces of $\Gamma(f)$ form a partition of $(\mathbb{R}^{+})^n$, see (\ref{cones_associated_to_faces_partition}). Consequently $\mathbb{N}^n$ is the disjoint union , 
        \begin{equation*}
            \mathbb{N}^n =  \{0\} \cup \bigcup_{\substack{\tau \, \text{proper face} \\ \text{of}\, \Gamma(f)}} \mathbb{N}^n \cap \Delta_\tau.
        \end{equation*}
It follows that 
    \begin{align*}
        Z_f(s) 
        & =  \sum_{k \in \mathbb{N}^n} \int_{\substack{x \in \mathbb{Z}_p^n \\ v_p(x) = k}} \vert f(x) \vert_p^s \vert dx \vert \\
        & = \int_{(\mathbb{Z}_p^{\times})^n} \vert f(x) \vert_p^s \vert dx \vert + \sum_{\substack{\tau \, \text{proper} \\ \text{face of}\, \Gamma(f)}}\sum_{k \in \mathbb{N}^n \cap \Delta_{\tau}} \int_{\substack{x \in \mathbb{Z}_p^n \\ v_p(x) = k}} \vert f(x) \vert_p^s \vert dx \vert \\
        & =  \int_{(\mathbb{Z}_p^{\times})^n} \vert f(x) \vert_p^s \vert dx \vert + \sum_{\substack{\tau \, \text{proper} \\ \text{face of}\, \Gamma(f)}}\sum_{k \in \mathbb{N}^n \cap \Delta_{\tau}}p^{-\sigma(k)-m(k) s}\int_{(\mathbb{Z}_p^{\times})^n} \vert f_{\tau}(u) + p \tilde{f_\tau}(u) \vert_p^s \vert du \vert.
    \end{align*}
    \begin{theorem}\label{newton_formula_igusa}
        Let $p$ be a prime number. Let $f$ be as in Definition \ref{newton_polyhedron}. Suppose that $f$ is non-degenerated over the finite field $\mathbb{F}_p$ with respect to all the faces of its Newton polyhedron $\Gamma(f)$. Denote for every face $\tau$ of $\Gamma(f)$ 
            \begin{equation*}
                N_{\tau} = \# \{a \in (\mathbb{F}_p^{\times})^n \, \vert \, \overline{f}_{\tau}(a) = 0\}.
            \end{equation*}
        Let $s$ be a complex variable with $\Re(s) > 0$. Then
            \begin{align*}
                Z_f(s) = L_{\Gamma(f)} + \sum_{ \tau \, \text{proper face of} \, \Gamma(f)}L_{\tau}S_{\Delta \tau}
            \end{align*}
        with
            \begin{align*}
                L_{\tau} &= p^{-n} \Big((p-1)^{n} - pN_{\tau} \frac{p^s-1}{p^{s+1}-1} \Big) \\
                S_{\Delta \tau}& = \sum_{k \in \mathbb{N}^n \cap \Delta \tau}p^{-\sigma(k)-m(k)s}.
            \end{align*}
    \end{theorem}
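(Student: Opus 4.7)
The plan is to assemble the proof directly from the calculation already laid out in Section \ref{Why being non-degenerated makes it easier to calculate the Igusa Zeta function}. I would begin by partitioning $\mathbb{Z}_p^n$ into the pieces on which $v_p(x) = k$ for $k \in \mathbb{N}^n$, writing
\begin{equation*}
Z_f(s) = \sum_{k \in \mathbb{N}^n} \int_{\substack{x \in \mathbb{Z}_p^n \\ v_p(x) = k}} |f(x)|_p^s |dx|,
\end{equation*}
and then use (\ref{cones_associated_to_faces_partition}) to group the summands according to the unique face $\tau$ of $\Gamma(f)$ for which $k \in \Delta_\tau$, separating out the contribution of $k = 0$ (which corresponds to $\tau = \Gamma(f)$).

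For $k = 0$, the integration domain is $(\mathbb{Z}_p^\times)^n$ and the relevant series is $f = f_{\Gamma(f)}$ itself. The non-degeneracy hypothesis applied to the top face $\Gamma(f)$ is exactly the hypothesis of Proposition \ref{proposition}, and the count $N_{\Gamma(f)}$ coincides with the $N$ appearing there, so this piece evaluates to $L_{\Gamma(f)}$.

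For each proper face $\tau$ and each $k \in \mathbb{N}^n \cap \Delta_\tau$, I would invoke the factorization $f(x) = p^{m(k)}(f_\tau(u) + p \tilde{f}_\tau(u))$ obtained in the preceding text via $x = p^k u$, which gives
\begin{equation*}
\int_{\substack{x \in \mathbb{Z}_p^n \\ v_p(x) = k}} |f(x)|_p^s |dx| = p^{-\sigma(k) - m(k)s} \int_{(\mathbb{Z}_p^\times)^n} |f_\tau(u) + p\tilde{f}_\tau(u)|_p^s |du|.
\end{equation*}
Since $f_\tau + p\tilde{f}_\tau \equiv f_\tau \pmod{p}$, the non-degeneracy hypothesis on $\tau$ transfers to the perturbed series, so Proposition \ref{proposition} applies with the unchanged count $N_\tau$ and evaluates the unit integral to $L_\tau$. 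Summing $p^{-\sigma(k) - m(k)s}$ over $k \in \mathbb{N}^n \cap \Delta_\tau$ then produces $S_{\Delta \tau}$, so the total contribution of $\tau$ is $L_\tau S_{\Delta \tau}$.

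The main verification point, rather than a true obstacle, is to confirm that $\tilde{f}_\tau$ really lies in $\mathbb{Z}_p\{u\}$ so the perturbed series is an honest element of the setting on which Proposition \ref{proposition} can be invoked. This follows because each coefficient $a_\omega p^{k \cdot \omega - m(k) - 1}$ (for $\omega \in \text{supp}(f) \setminus \tau$) satisfies $k \cdot \omega - m(k) - 1 \geq 0$ by the strict inequality $k \cdot \omega > m(k)$ combined with integrality, and $|a_\omega p^{k \cdot \omega - m(k) - 1}|_p \leq |a_\omega|_p \to 0$ as $|\omega| \to \infty$. With this in hand, the theorem follows as a direct consolidation of the computation already performed.
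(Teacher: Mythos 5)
Your proposal is correct and follows essentially the same route as the paper, which derives the theorem by exactly this decomposition over $k \in \mathbb{N}^n$, grouping by the cones $\Delta_\tau$, and applying Proposition \ref{proposition} to each unit integral. Your added check that $\tilde{f}_\tau$ lies in $\mathbb{Z}_p\{u\}$ is a worthwhile detail the paper leaves implicit, but it does not change the argument.
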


\subsection{You can cut off}\label{you can cut off}
Note that the formula from Theorem (\ref{newton_formula_igusa}) only depends on $N_{\tau}$, which in turn is determined by $f \mod p$, and the Newton polyhedron $\Gamma(f)$. Therefore we will want to have $f_D \equiv f \mod D$ and $\Gamma(f_D) = \Gamma(f)$. Let us now focus on the latter condition. \\ \\
\noindent Let $f(x_1,\dots, x_n) = \sum_{\omega \in \mathbb{N}^n} a_{\omega}x^{\omega}$ be a non-zero analytic function over $\mathbb{Z}_p$ with $f(0) = 0$. Note that supp$(f)$ is not necessarily finite. However, we do have the following:
    \begin{lemma}
        There exist $\omega_1, \dots, \omega_k \in \text{supp}(f)$ such that 
            \begin{equation*}
                \bigcup_{\omega \in \text{supp}(f)} \omega + (\mathbb{R}^{+})^n = \bigcup_{i = 1}^k \omega_i + (\mathbb{R}^{+})^n
            \end{equation*}
    \end{lemma}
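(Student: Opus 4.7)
The plan is to notice that this statement is a purely combinatorial fact about subsets of $\N^n$: the analytic nature of $f$ and the condition $\vert a_{\omega} \vert_p \to 0$ play no role; only $\text{supp}(f) \subseteq \N^n$ matters. The underlying principle is Dickson's lemma.

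First I would reformulate the claim in terms of the coordinatewise partial order on $\N^n$. For $\omega, \omega' \in \N^n$, one has
\[
    \omega + (\R^{+})^n \supseteq \omega' + (\R^{+})^n \iff \omega_i \le \omega'_i \text{ for all } i = 1, \dots, n,
\]
i.e. $\omega \le \omega'$ coordinatewise. So it suffices to exhibit finitely many $\omega_1, \dots, \omega_k \in \text{supp}(f)$ such that every $\omega \in \text{supp}(f)$ dominates some $\omega_i$ in this order.

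Next I would invoke Dickson's lemma: every nonempty subset $S \subseteq \N^n$ has only finitely many minimal elements under the coordinatewise order, and every element of $S$ dominates at least one of them. Equivalently, the monomial ideal $(x^{\omega} : \omega \in S)$ in $\Z[x_1, \dots, x_n]$ is finitely generated, which is a consequence of Noetherianity (or can be proved directly by induction on $n$). Applying this to $S = \text{supp}(f)$ and letting $\omega_1, \dots, \omega_k$ be the minimal elements of $\text{supp}(f)$, the claimed equality of unions follows at once: the inclusion $\supseteq$ is immediate since each $\omega_i$ lies in $\text{supp}(f)$, and for $\subseteq$ any $\omega \in \text{supp}(f)$ dominates some $\omega_i$, hence $\omega + (\R^{+})^n \subseteq \omega_i + (\R^{+})^n$.

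There is essentially no obstacle here; the only real content is the classical Dickson lemma, and the convergence hypothesis on the $a_{\omega}$ is never used, so in fact the same statement holds for an arbitrary formal power series (and indeed for any subset of $\N^n$).
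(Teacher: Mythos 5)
Your proposal is correct and is essentially the same argument as the paper's: the paper proves the statement by the standard induction on $n$, which is precisely the direct proof of Dickson's lemma that you allude to, while you simply cite the lemma (equivalently, finite generation of monomial ideals) as a known fact. Your reformulation via the coordinatewise partial order and your observation that the convergence hypothesis on the coefficients is irrelevant are both accurate.
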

\begin{proof}
    We prove this with induction on $n$. The case $n= 1$ is trivial, so consider $n > 1$. It suffices to show that $\text{supp}(f) \subset \bigcup_{i = 1}^k \omega_i + (\mathbb{R}^{+})^n$. Pick any $\omega_1 = (\alpha_1, \dots, \alpha_n) \in \text{supp}(f)$. Observe that $\omega_1 + (\mathbb{R}^{+})^n$ already contains every $\omega = (w_1, \dots, w_n) \in \text{supp}(f)$ satisfying $w_i \geq \alpha_i$ for all $i$. Now we also want to add the $\omega = (w_1, \dots, w_n) \in \text{supp}(f)$ for which $w_i < \alpha_i$ for some index $i$. Let us start by adding all $\omega \in \text{supp}(f)$ of the form $(a, w_2, \dots, w_n)$ for some $a < \alpha_1$. So fix $a \in \{0, 1, \dots, \alpha_1-1\}$ and define
        \begin{equation*}
            A = \{ (w_2, \dots, w_n) \, \vert \, (a, w_2, \dots, w_n) \in \text{supp}(f)\}.
        \end{equation*}
    Note that this is equal to $\text{supp}(g)$ where $g$ is defined as follows: collecting all the terms in $f$ whose $x_1$-exponent is equal to $a$, we can write is as $x_1^{a}g(x_2, \dots, x_n)$. 
    By the induction hypothesis, there are $\eta_1, \dots, \eta_l \in \text{supp}(g)$ such that 
        \begin{equation*}
            \bigcup_{\eta \in \text{supp}(g)} \eta + (\mathbb{R}^{+})^{n-1} = \bigcup_{i = 1}^l \eta_i + (\mathbb{R}^{+})^{n-1}
        \end{equation*}
    It is not hard to see that then
        \begin{equation*}
            \bigcup_{i = 1}^l (a, \eta_i) + (\mathbb{R}^{+})^{n}
        \end{equation*}
    contains all $\omega \in \text{supp}(f)$ of the form $(a, w_2, \dots, w_n)$. We can apply this procedure for any $a \in \{0, 1, \dots, \alpha_1-1\}$ and subsequently repeat it finitely many times for $w_2 \in \{0, 1, \dots, \alpha_2-1\}, \dots, w_n \in \{0, 1, \dots, \alpha_n-1\}$ as well. This gives us $\{\omega_1, \dots, \omega_k\}$ such that
        \begin{equation*}
             \bigcup_{i = 1}^k \omega_k + (\mathbb{R}^{+})^{n}
        \end{equation*}
    contains $\text{supp}(f)$. This completes the argument. 
\end{proof}

\begin{corollary}
    Let $f(x) \in \mathbb{Z}_p\{X\}$ be a non-zero analytic function over $\mathbb{Z}_p$ with $f(0) = 0$. There exists $N_1$ such that the Newton polyhedron of $f$ is equal to the Newton polyhedron of the polynomial $f_{D}$ for all $D \geq N_1$.
\end{corollary}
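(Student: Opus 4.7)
The plan is to combine the previous lemma with the observation that the Newton polyhedron is determined by finitely many lattice points in $\text{supp}(f)$. First I would invoke the lemma just proved to produce a finite list $\omega_1, \ldots, \omega_k \in \text{supp}(f)$ satisfying
\[
\bigcup_{\omega \in \text{supp}(f)} \omega + (\mathbb{R}^{+})^n = \bigcup_{i=1}^k \omega_i + (\mathbb{R}^{+})^n.
\]
Taking convex hulls, this means $\Gamma(f)$ is already the convex hull of $\bigcup_{i=1}^k \omega_i + (\mathbb{R}^{+})^n$, so only finitely many exponents are needed to describe $\Gamma(f)$.

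Next, I would set $N_1 = \max_{1 \leq i \leq k} |\omega_i|$. For any $D \geq N_1$, each $\omega_i$ satisfies $|\omega_i| \leq D$ and $a_{\omega_i} \neq 0$, so $\omega_1, \ldots, \omega_k \in \text{supp}(f_D)$. Consequently
\[
\bigcup_{i=1}^k \omega_i + (\mathbb{R}^{+})^n \;\subseteq\; \bigcup_{\omega \in \text{supp}(f_D)} \omega + (\mathbb{R}^{+})^n,
\]
and passing to convex hulls gives $\Gamma(f) \subseteq \Gamma(f_D)$. The reverse inclusion is immediate from $\text{supp}(f_D) \subseteq \text{supp}(f)$, which already at the level of the generating sets gives $\bigcup_{\omega \in \text{supp}(f_D)} \omega + (\mathbb{R}^{+})^n \subseteq \bigcup_{\omega \in \text{supp}(f)} \omega + (\mathbb{R}^{+})^n$, hence $\Gamma(f_D) \subseteq \Gamma(f)$.

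There is no real obstacle here once the preceding lemma is in hand; the entire content is that finitely many exponents suffice to generate $\Gamma(f)$, and truncating at any degree beyond the maximum $|i|$-norm of those exponents preserves them. The only point worth flagging is that the definition of $f_D$ preserves coefficients verbatim (it does not reduce modulo anything), so $a_{\omega_i} \neq 0$ in $f$ forces $\omega_i \in \text{supp}(f_D)$ whenever $|\omega_i| \leq D$. This is all that is needed to conclude the corollary.
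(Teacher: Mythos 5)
Your proof is correct and follows essentially the same route as the paper: both invoke the preceding lemma to obtain finitely many generators $\omega_1,\dots,\omega_k$ of $\Gamma(f)$, take $N_1$ to be the maximum of their degrees, and conclude by sandwiching the generating unions before passing to convex hulls. No gaps.
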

\begin{proof}
    Pick $\omega_1, \dots, \omega_k$ as in the previous Lemma. Denote $\sigma(\omega) = \sigma(w_1, \dots, w_n) = w_1 + \dots + w_n$ and choose $N_1 \geq \max (\sigma(\omega_i))$. Then clearly $\{\omega_1, \dots, \omega_k\} \subset \text{supp}(f_D)$ for all $D \geq D_1$. Consequently 
    \begin{equation*}
        \bigcup_{\omega \in \text{supp}(f)} \omega + (\mathbb{R}^{+})^n = \bigcup_{i = 1}^k \omega_i + (\mathbb{R}^{+})^n \subset \bigcup_{\omega \in \text{supp}(f_D)} \omega + (\mathbb{R}^{+})^n \subset \bigcup_{\omega \in \text{supp}(f)} \omega + (\mathbb{R}^{+})^n
    \end{equation*}
    so that we may conclude $\bigcup_{\omega \in \text{supp}(f_D)} \omega + (\mathbb{R}^{+})^n = \bigcup_{\omega \in \text{supp}(f)} \omega + (\mathbb{R}^{+})^n$. Since $\Gamma(f_D)$ and $\Gamma(f)$ are the convex hull of this, we get that indeed $\Gamma(f_D)=\Gamma(f)$ for all $D \geq N_1$. 
    \end{proof}
\begin{proposition}
    Let $f \in \mathbb{Z}_p\{X\}$ be non-degenerated over $\mathbb{F}_p$ with respect to its Newton polyhedron. Denote $f_D$ for the polynomial obtained by only considering the terms of $f$ of degree $\leq D$. Then there exists $N$ such that for all $D \geq N$, the truncation $f_D$ has the same Igusa Zeta function as $f$.
\end{proposition}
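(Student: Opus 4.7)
The plan is to invoke Theorem~\ref{newton_formula_igusa}, which expresses $Z_f(s)$ as an explicit finite sum whose only inputs are the Newton polyhedron $\Gamma(f)$ (through its faces, their associated cones $\Delta_\tau$, and the functions $\sigma$, $m$) and the integers $N_\tau$ derived from the reductions $\overline{f}_\tau$ modulo $p$. If $N$ can be chosen so that for every $D \ge N$ both data sets for $f_D$ coincide with those for $f$, and $f_D$ is itself non-degenerate with respect to $\Gamma(f_D)$, then applying the theorem to $f_D$ will reproduce the formula for $f$ term by term, yielding $Z_{f_D}(s) = Z_f(s)$.

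First I would invoke the preceding Corollary to produce $N_1$ with $\Gamma(f_D) = \Gamma(f)$ for all $D \ge N_1$; this settles the polyhedral input, including the cones $\Delta_\tau$ and the functions $\sigma$, $m$ on $\mathbb{N}^n$. Next I would exploit the defining property of a restricted power series: since $|a_\omega|_p \to 0$ as $|\omega| \to \infty$, the set $S = \{\omega \in \text{supp}(f) : v_p(a_\omega) = 0\}$ is finite, so taking $N_2 = \max_{\omega \in S}|\omega|$ guarantees $f \equiv f_D \pmod{p}$ and hence $\overline{f} = \overline{f_D}$ as polynomials over $\mathbb{F}_p$ for all $D \ge N_2$. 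Restricting to any face $\tau$ of $\Gamma(f) = \Gamma(f_D)$ then gives $\overline{(f_D)}_\tau = \overline{f}_\tau$, with two consequences: $N_\tau(f_D) = N_\tau(f)$, and non-degeneracy transfers, because the singular loci of the two reductions in $(\mathbb{F}_p^\times)^n$ coincide.

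With $N = \max(N_1, N_2)$, both $f$ and $f_D$ satisfy the hypotheses of Theorem~\ref{newton_formula_igusa} with identical polyhedral and mod-$p$ data, so the two formulas output the same rational function of $p^{-s}$, giving $Z_f(s) = Z_{f_D}(s)$.

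The only delicate point, rather than a true obstacle, concerns non-compact faces: for such a $\tau$, the face series $f_\tau$ is in general an infinite sum, so checking $\overline{(f_D)}_\tau = \overline{f}_\tau$ could \emph{a priori} require comparing infinitely many coefficients. What saves the argument is precisely the restricted-series hypothesis $|a_\omega|_p \to 0$: it collapses both reductions to genuine polynomials whose supports lie in the finite set $S$, reducing the identity to a check on finitely many monomials, which is enforceable by a single bound on $D$.
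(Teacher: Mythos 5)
Your proposal is correct and follows the same route as the paper: take $N_1$ from the Corollary so that $\Gamma(f_D)=\Gamma(f)$, take $N_2$ so that $f_D\equiv f \bmod p$ (justified exactly as you do, by finiteness of the set of unit coefficients), and conclude via the formula of Theorem~\ref{newton_formula_igusa}. You additionally spell out two points the paper leaves implicit --- that $f_D$ inherits non-degeneracy and that the comparison of $\overline{f}_\tau$ with $\overline{(f_D)}_\tau$ on non-compact faces reduces to finitely many monomials --- which is a welcome refinement of the same argument.
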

\begin{proof}
Let $N_1$ be such that $\Gamma(f_D) = \Gamma(f)$ for all $D \geq N_1$. Let $N_2$ be such that $f_D \equiv f \mod p$ for all $D \geq N_2$. Then for all $D \geq \max (N_1, N_2)$, the statement follows from the formula in Theorem \ref{newton_formula_igusa}. 
\end{proof}
\noindent
\bibliographystyle{amsplain}
\bibliography{ref}

\end{document}